\theoremstyle{plain}
\newtheorem{thm}{Theorem}[section]
\newtheorem{lem}[thm]{Lemma}
\newtheorem{prop}[thm]{Proposition}
\theoremstyle{definition}
\newtheorem{defn}[thm]{Definition}
 \font\cyr=wncyr10
 \newcommand{\nc}{\newcommand}
 \nc{\reg}{{\rm reg}}
 \nc{\mal}{{{\scriptstyle \maltese}}}
 \nc{\re}{{\Re}}
 \nc{\fA}{{\mathfrak A}}
 \nc{\ra}{\rightarrow}
 \nc{\ors}{{\vec s\,}}
 \nc{\os}{{\overset}}
 \nc{\Z}{{\mathbb Z}}
 \nc{\R}{{\mathbb R}}
 \nc{\N}{{\mathbb N}}
 \nc{\ZN}{{\mathbb Z_{\ge 0}}}
 \nc{\Q}{{\mathbb Q}}
 \nc{\C}{{\mathbb C}}
 \nc{\D}{{\mathcal D}}
 \nc{\caT}{{\mathcal T}}
 \nc{\tB}{{\tilde B}}
 \nc{\Li}{{\rm Li}}
 \nc{\suf}{{\ast\,}}
 \nc{\sufq}{{\ast_q\,}}
 \nc{\gam}{{\gamma}}
 \nc{\ga}{{\alpha}}
 \nc{\gl}{{\lambda}}
 \nc{\gb}{{\beta}}
 \nc{\gd}{{\delta}}
 \nc{\gs}{{\sigma}}
 \nc{\gS}{{\Sigma}}
 \nc{\sif}{{\mathcal S}}
 \nc{\gt}{{\tau}}
 \nc{\Lra}{\Longrightarrow}
 \nc{\lra}{\longrightarrow}
 \nc{\fS}{{\mathfrak S}}
 \nc{\DD}{{\mathfrak D}}
 \nc{\Llra}{\Longleftrightarrow}
 \nc{\ol}{\overline}
 \nc{\lms}{\longmapsto}
 \nc{\zq}{{\zeta_q}}
 \nc\qup{{q\uparrow 1}}
 \nc{\us}{\underset}
 \nc{\tn}{{\tilde{n}}}
 \nc{\gD}{{\Delta}}
 \nc{\bk}{{\bf k}}
\nc{\sha}{{\mbox{\cyr x}}}
\begin{document}

\title{An exotic shuffle relation of  $\zeta(\{2\}^m)$ and $\zeta(\{3,1\}^n)$}
\author{Jianqiang Zhao}
\date{}
\maketitle
\begin{center}
{\large Department of Mathematics, Eckerd College, St. Petersburg,
FL 33711}
\end{center}
\vskip0.6cm

\begin{center}
{\bf Abstract} \end{center} In this short note we will provide a
new and shorter proof of the following exotic shuffle relation of
multiple zeta values:
 $$\zeta(\{2\}^m \sha\{3,1\}^n)={2n+m\choose m}
 \frac{\pi^{4n+2m}}{(2n+1)\cdot (4n+2m+1)!}.$$
This was proved by Zagier when $n=0$, by Broadhurst when $m=0$,
and by Borwein, Bradley, and Broadhurst when $m=1$. In general
this was proved by Bowman and Bradley in \emph{The algebra and
combinatorics of shuffles and multiple zeta values}, J. of
Combinatorial Theory, Series A, Vol. \textbf{97} (1)(2002),
43--63. Our idea in the general case is to use the method of
Borwein et al.~to reduce the above general relation to some
families of combinatorial identities which can be verified by
WZ-method.

\section{Introduction}
The multiple zeta values (MZVs) are defined by the series
\begin{equation}\label{zeta}
\zeta(s_1,\dots, s_d)=\sum_{k_1>\dots>k_d>0}
 \frac{1}{k_1^{s_1}\cdots k_d^{s_d}}
\end{equation}
where $s_1,\dots, s_d$ are positive integers and $s_1>1$. These
values  were first studied by Euler systematically but then were
forgotten for many years. In the past two decades they have become
quite popular research subjects due to their prominent roles in
many branches of mathematics and physics (see for example
\cite{BBB,H-Pac,Zag} and their references).

Two different kinds of shuffle products are the keys in
discovering linear relations among MZVs of the same weight. One
comes from the series representation given by \eqref{zeta} while
the other from iterated integral representations first noticed by
Kontsevich \cite{K1} when he was studying knot invariants.

In \cite{Zag} Zagier proposed the following conjecture which was
proved subsequently by Broadhurst (see \cite{BBB}):
\begin{equation}\label{equ:31}
\zeta(\{3,1\}^n) = \frac{2\pi^{4n}}{(4n+2)!}.
\end{equation}
Then Borwein, Bradley and Broadhurst \cite{BBB} proved the
following exotic shuffle formula:
 $$\zeta(\{2\}\sha\{3,1\}^n ) =\frac{\pi^{4n+2}}{(4n+3)!}.$$
This was subsequently generalized by Bowman and Bradley as
follows:
\begin{thm} \emph{(\cite[Corollary 5.1]{BoB}) }\label{thm:main} For any positive integers $m$ and $n$,
 $$\zeta(\{2\}^m \sha\{3,1\}^n)= {2n+m\choose m}
 \frac{\pi^{4n+2m}}{(2n+1)\cdot (4n+2m+1)!}.$$
\end{thm}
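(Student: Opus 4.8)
The plan is to pass to the iterated-integral representation of multiple zeta values, writing $\zeta(s_1,\dots,s_d)$ as an integral over $1>t_1>\cdots>t_w>0$ of the word in the two one-forms $\frac{dt}{t}$ and $\frac{dt}{1-t}$ determined by the composition (each part $s$ contributing $s-1$ factors of $\frac{dt}{t}$ followed by one factor of $\frac{dt}{1-t}$). Under this dictionary the block $\{2\}$ becomes the two-form block $\frac{dt}{t}\frac{dt}{1-t}$, while $\{3,1\}^n$ becomes the word $\bigl(\frac{dt}{t}\frac{dt}{t}\frac{dt}{1-t}\frac{dt}{1-t}\bigr)^n$. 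The exotic (argument-level) shuffle $\{2\}^m\sha\{3,1\}^n$ then acquires a transparent meaning: it is the sum over all ways of inserting $m$ copies of the block $\frac{dt}{t}\frac{dt}{1-t}$ into the $2n+1$ slots determined by the $2n$ parts of $\{3,1\}^n$. A short counting check shows there are exactly $\binom{2n+m}{m}$ such insertions, each occurring once, which already accounts for the binomial coefficient in the statement.

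First I would organize these $\binom{2n+m}{m}$ integrals through the generating function $\Theta(x,y)=\sum_{m,n\ge0}\zeta(\{2\}^m\sha\{3,1\}^n)\,x^{2m}y^{4n}$. Following the method of Borwein, Bradley and Broadhurst, the sum over all insertions of the $m$ weighting blocks, together with the $x$-weighting, can be absorbed into a single decorated iterated integral: at each slot one inserts the full generating series $\sum_{j\ge0}x^{2j}\bigl(\frac{dt}{t}\frac{dt}{1-t}\bigr)^j$ over the corresponding subinterval, whose local behaviour is governed by the elementary identity $\sum_{m\ge0}\zeta(\{2\}^m)x^{2m}=\frac{\sinh(\pi x)}{\pi x}$. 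Integrating out the decorated forms should collapse $\Theta$ to a closed expression of the anticipated type $\frac{\cosh(\pi\sqrt{x^2+y^2})-\cos(\pi\sqrt{y^2-x^2})}{\pi^2y^2}$, which specializes correctly at $x=0$ to the known series $\sum_n\zeta(\{3,1\}^n)y^{4n}=\frac{\cosh(\pi y)-\cos(\pi y)}{(\pi y)^2}$ and at $y=0$ to $\frac{\sinh(\pi x)}{\pi x}$.

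Rather than evaluate the integral in one stroke, the honest route—the one the abstract advertises—is to carry out the integration slot by slot and by partial fractions, which produces $\zeta(\{2\}^m\sha\{3,1\}^n)$ as an explicit finite multiple sum of binomial and factorial terms times $\pi^{4n+2m}$. It is worth stressing that the individual shuffle summands (for instance $\zeta(3,2,1)$ at weight $6$) need not be rational multiples of $\pi^{4n+2m}$, so the rationality is a property of the aggregate and the reduction must keep the whole family together. Matching this finite sum against the target $\binom{2n+m}{m}\frac{\pi^{4n+2m}}{(2n+1)(4n+2m+1)!}$ then amounts, after cancelling $\pi^{4n+2m}$, to a family of terminating hypergeometric identities indexed by $(m,n)$. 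These I would settle by the WZ method: exhibit the Gosper–Zeilberger rational certificate for the summand, verify the resulting first-order recurrence in (say) $m$, and check the base case $m=0$ against the Zagier–Broadhurst evaluation $\zeta(\{3,1\}^n)=\frac{2\pi^{4n}}{(4n+2)!}$ from \eqref{equ:31}.

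The hard part will be the reduction step, not the WZ verification. Specifically, I expect the main obstacle to be arranging the slot-by-slot integration so that the sum over the $\binom{2n+m}{m}$ insertions telescopes into a single-index hypergeometric sum to which WZ applies cleanly; the double index $(m,n)$ may force one either to freeze $n$ and run WZ in $m$, or to introduce an auxiliary continuous parameter and recover the identity by analytic continuation. A secondary technical point is the careful bookkeeping of the $2n+1$ slots together with the convergence and regularization of the leading block, so that the decorated integrand is assembled without spurious boundary terms.
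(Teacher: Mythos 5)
Your reading of the statement is correct (the left side is the sum of the $\binom{2n+m}{m}$ argument-level insertions of $m$ twos into $\{3,1\}^n$), and your conjectured generating function is actually right: the coefficient of $x^{2m}y^{4n}$ in $\bigl(\cosh(\pi\sqrt{x^2+y^2})-\cos(\pi\sqrt{y^2-x^2})\bigr)/(\pi^2y^2)$ is exactly $\binom{2n+m}{m}\pi^{4n+2m}/\bigl((2n+1)(4n+2m+1)!\bigr)$, so establishing it would prove Theorem~\ref{thm:main}. But your proposal never supplies the step that does the work, and the two mechanisms you sketch for it would fail as stated. First, inserting the series $\sum_{j\ge0}x^{2j}(AB)^j$ ``at each slot'' does not factor through $\sum_{m\ge0}\zeta(\{2\}^m)x^{2m}=\sinh(\pi x)/(\pi x)$: each slot contributes an iterated integral over a subinterval whose endpoints are themselves integration variables shared with neighboring slots, so the slot contributions are incomplete polylogarithmic functions coupled by the simplex ordering, and the product structure your collapse requires is precisely what is absent. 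Second, ``slot-by-slot partial fractions'' cannot yield a finite sum of rational multiples of $\pi^{4n+2m}$ term by term --- as you yourself observe, individual summands such as $\zeta(3,2,1)$ are not expected to be rational in $\pi^6$ --- and you offer no mechanism by which the aggregate escapes this; flagging the obstacle (``the reduction must keep the whole family together'') is not the same as overcoming it. So the reduction to a WZ-amenable identity, which you correctly identify as the hard part, is a genuine gap rather than a technical one.

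The paper fills this gap not by integration but by a triangular linear inversion. By Definition~\ref{def-spqi} and Lemma~\ref{lem:prop}, the full letter-level shuffle $(AB)^p\sha(AB)^q$ --- whose integral is the \emph{known} quantity $\zeta(\{2\}^p)\zeta(\{2\}^q)$, a rational multiple of $\pi^{2p+2q}$ by \eqref{equ:z22} --- equals the triangular combination $\sum_j\binom{p+q-2j}{p-j}4^jT_{p+q,j}$, and the integrals of the $T_{p+q,j}$ are precisely the unknowns $\zeta(\{2\}^{p+q-2j}\sha\{3,1\}^j)$. The alternating weighted sums of Lemma~\ref{lem:keylem}, with weights $(-1)^kk^{2m-1}$ and $(-1)^kk^{2m}$, invert this system, reducing the theorem to the explicit binomial identities \eqref{equ:genodd}--\eqref{equ:geneven}; these are the family you hoped to reach by partial fractions. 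The WZ endgame also differs in shape from yours: after extending the outer sums to infinity and trading the basis $\{k^m\}$ for $\{\binom{k}{m}\}$ (Zeilberger's suggestion), an inner Chu--Vandermonde evaluation collapses the double sum as in \eqref{equ:oddcase}, and both sides are shown to satisfy a common third-order recurrence in $n$ with base cases $n=1,2,3$ --- not a first-order recurrence in $m$ anchored at $m=0$ via \eqref{equ:31}. To salvage your plan you would need to replace the ``decorated integral collapses'' step by this inversion (or some other aggregate mechanism proving the generating-function identity); once that is in place, the WZ verification is indeed routine.
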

In this short note we shall provide a new and shorter proof of
this relation.

This paper was conceived while I was visiting Chern Institute of
Mathematics at Nankai University and written at the Morningside
Center of Mathematics of Academia Sinica at Beijing in the summer
of 2007. I would like to thank both institutions and my hosts
Chengming Bai and Fei Xu for their hospitality and the ideal
working environment. Thanks also go to Doron Zeilberger who kindly
suggested an approach to proving the key combinatorial identities
in the paper.

\section{Some known  results}
We first recall some notation and results in \cite{BBB}. Let
$A=dx/x$ and $B=dx/(1-x)$ and form the alphabet $\{A,B\}$. Then
every MZV can be expressed as an iterated integral $\int_0^1 w$
where $w$ is some word beginning with $A$ and ending with $B$. For
example, $\zeta(3,1)=\int_0^1 A^2B^2$. Using Chen's theory of
iterated integrals \cite{Chen} we have
 $$\int_0^1 f_1\cdots f_m \int_0^1 f_{m+1}\cdots f_{m+n}
  =\int_0^1 (f_1\cdots f_m )\sha (f_{m+1}\cdots f_{m+n})$$
for one forms $f_1,\dots,f_{m+n}$, where
 $$(f_1\cdots f_m )\sha (f_{m+1}\cdots f_{m+n})=\sum_{\substack{\gs\in {\mathfrak
 S}_{m+n},
 \sigma^{-1}(1)<\cdots<\sigma^{-1}(n)\\  \sigma^{-1}(n+1)<\cdots< \sigma^{-1}(n+m)}}
 f_{\gs(1)}\dots f_{\gs(m+n)} $$
 is the shuffle product. Here ${\mathfrak S}_{m+n}$ is the permutation group of $m+n$ letters.

\begin{defn}
\label{def-spqi} (\cite[Defn.~1 and Defn.~2]{BBB}) Let $p$, $q$
and $j$ be non-negative integers such that ${\rm min}(p,q)\ge j$.
Let $T_{p+q,j}$ denote the sum of all those words occurring in
$(AB)^p\sha (AB)^q$ that contain the subword $A^2$ exactly $j$
times.
\end{defn}
We now provide two lemmas for the proof of the main result
Theorem~\ref{thm:main}. The first one is due to Borwein, Bradley
and Broadhurst.
\begin{lem} \label{lem:prop} \emph{(\cite[Prop.~1]{BBB})}
\label{ABp-ABq} For any non-negative integers $p$ and $q$ we have
\[
(AB)^p \sha (AB)^q = \sum_{j=0}^{\min(p,q)}
{{p+q-2j}\choose{p-j}}\cdot 4^j T_{p+q,j}.
\]
\end{lem}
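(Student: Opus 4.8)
The plan is to read off the coefficient of each individual word on the left-hand side and to show that it depends only on the number of subwords $A^2$ the word contains. Expanding $(AB)^p\sha (AB)^q$, a word $w$ occurs with multiplicity equal to the number of interleavings of the alternating words $(AB)^p$ and $(AB)^q$ that produce $w$; equivalently, the number of ways to color the letters of $w$ with two colors so that the color-$1$ subsequence is $(AB)^p$ and the color-$2$ subsequence is $(AB)^q$. Since both factors are alternating, every maximal block of equal letters in such a $w$ has length $1$ or $2$, and as $w$ begins with $A$ and ends with $B$ its maximal $A$-blocks and $B$-blocks alternate, so there are equally many of each, say $r$ of them. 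Counting letters gives $r=p+q-j$, where $j$ is the number of length-$2$ $A$-blocks; the same count for $B$ shows $w$ also has exactly $j$ length-$2$ $B$-blocks. Thus every word counted in $T_{p+q,j}$ has precisely $j$ subwords $AA$ and $j$ subwords $BB$, and it suffices to prove that each such word carries coefficient $\binom{p+q-2j}{p-j}4^j$.

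First I would dispose of the length-$2$ blocks. In any valid coloring the two letters of a block $AA$ cannot receive the same color, since two equal consecutive letters in one color would violate the alternation of $(AB)^p$ or $(AB)^q$; hence one $A$ goes to each color, in one of $2$ orders, and likewise for each block $BB$. This already produces the factor $4^j=2^j\cdot 2^j$ and, crucially, forces each color to receive exactly $j$ of its $A$'s (and $j$ of its $B$'s) from the length-$2$ blocks, independently of the arrangement.

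It then remains to distribute the $p+q-2j$ single $A$'s and the $p+q-2j$ single $B$'s. Here the natural bookkeeping is a left-to-right scan in which I maintain, for each color, whether it currently ``awaits an $A$'' or ``awaits a $B$'', so that the coloring stays compatible with $(AB)^p$ and $(AB)^q$. I would show that once a subset of the single $A$-blocks is assigned to color $1$, the colors of all single $B$-blocks are then forced by the alternation requirement, and that the scan never stalls: immediately before any $B$-block the preceding block is an $A$-block, after which at least one color awaits a $B$, so a single $B$ can always be placed, while a block $BB$ occurs exactly where both colors await a $B$. To make color $1$ equal to $(AB)^p$ it must collect $p$ letters $A$ in all, hence $p-j$ of the $p+q-2j$ single $A$-blocks, giving $\binom{p+q-2j}{p-j}$ admissible choices. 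Equivalently, marking the color-$1$ (resp. color-$2$) letters $A$ by a variable $u$ (resp. $v$) and summing over colorings, I expect the generating polynomial of $w$ to factor as $4^j(uv)^j(u+v)^{p+q-2j}$, whose $u^pv^q$-coefficient is exactly $\binom{p+q-2j}{p-j}4^j$; this factorization can be established by induction on the number of blocks, using a transfer-matrix step for one $A$-block followed by one $B$-block.

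The main obstacle is precisely the claim that the local choices always extend to a \emph{unique} globally valid coloring and that the resulting count is \emph{independent of the arrangement} of the blocks. The delicate point is the interaction between the length-$2$ $A$-blocks and the length-$2$ $B$-blocks: a block $BB$ can be filled only when both colors simultaneously await a $B$, which constrains how the double blocks may be positioned relative to one another. I expect to resolve this by showing that the left-to-right forcing procedure is well defined and consistent for every admissible choice, so that no compatibility condition beyond the block counts survives. This is exactly what makes the coefficient the same for all words in $T_{p+q,j}$ and yields Lemma~\ref{ABp-ABq}.
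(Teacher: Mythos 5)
The paper never proves this lemma: it imports it verbatim from \cite[Prop.~1]{BBB}, so there is no internal proof to compare against, and your counting strategy is in the same spirit as the original source. Your structural observations are all correct: every maximal block of an occurring word has length at most $2$; a word with $j$ blocks $AA$ has exactly $j$ blocks $BB$ (by the letter count you give); each double block must split one letter to each color, in $2$ orders, yielding the factor $4^j$; and each color then draws exactly $j$ of its $A$'s and $j$ of its $B$'s from the doubles, so color $1$ needs exactly $p-j$ of the $p+q-2j$ single $A$'s. However, the proposal stops precisely at the step you flag yourself, and one phrase there is genuinely dangerous: the coefficient is \emph{not} ``independent of the arrangement of the blocks.'' Concretely, $ABBAAB$ has exactly one $AA$ and one $BB$, yet occurs with multiplicity $0$ in $(AB)^2\sha(AB)$, not $4$: when its $BB$ block is reached only one $A$ has been read, so the two colors cannot both be awaiting a $B$. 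You are rescued only because Definition~\ref{def-spqi} builds $T_{p+q,j}$ from words that \emph{actually occur} in the shuffle; but then a complete proof must characterize occurrence and verify that, on that class of words, every choice of a $(p-j)$-subset of single $A$'s extends to a unique valid coloring. As written, that verification is deferred (``I expect to resolve this\dots''), so the argument is an essentially correct plan with its decisive step unproven.

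The step does close along the lines you sketch, and the observation that makes it clean is that the excess $e=\#A-\#B$ of a prefix is a function of the word alone, independent of all coloring choices, and $e$ equals the number of colors currently awaiting a $B$. Hence a word occurs if and only if $e$ stays in $\{0,1,2\}$ throughout, with $AA$ blocks possible exactly at $e=0$ and $BB$ blocks exactly at $e=2$; in particular the double blocks of an occurring word necessarily alternate $AA,BB,AA,\dots$, which is exactly the constraint that kills $ABBAAB$. For an occurring word the scan can never stall, whatever choices are made, because stalling is an excess condition. The only choice-dependent datum is \emph{which} color leads at $e=1$: a single $A$ at $e=0$ takes its preassigned color freely; a single $B$ at $e=2$ is always immediately followed (as the next block) by a single $A$ at $e=1$, and the two must take the same color, so the $B$ inherits that $A$'s preassigned color; every other single is forced outright. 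Thus colorings with fixed double-block orders correspond bijectively to $(p-j)$-subsets of the single $A$'s, the end-of-word $B$-counts balance automatically since each alternating color class starting with $A$ and summing to $p+q$ letters of each kind must end with $B$, and the multiplicity of every word of $T_{p+q,j}$ is $4^j\binom{p+q-2j}{p-j}$, as claimed. With this paragraph supplied, your proof is complete and matches the combinatorial argument of \cite{BBB}.
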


\begin{lem}\label{lem:keylem}
\label{lem:key} For any positive integers $n\ge m$ we have
 \begin{align}
 \label{Todd}
\sum_{k=1-n}^n (-1)^k k^{2m-1} \left[ (AB)^{n-k} \sha
(AB)^{n-1+k}\right] =&4^{n-j}\sum_{j=1}^{m}
 T_{2n-1,n-j}\cdot a_{2m-1,j},\\
 \label{Teven}
\sum_{k=-n}^n (-1)^k k^{2m} \left[ (AB)^{n-k} \sha (AB)^{n+k}
\right] =&4^{n-j} \sum_{j=1}^{m}
 T_{2n,n-j}\cdot a_{2m,j},
\end{align}
where for $j=1,\dots,m$ we set
  \begin{equation}\label{equ:aodd}
a_{2m-1,j}=\sum_{k=1-j}^j (-1)^k k^{2m-1}{2j-1\choose
 j-k},\qquad
 a_{2m,j}=\sum_{k=-j}^j (-1)^k k^{2m}{2j\choose j-k}.
\end{equation}
\end{lem}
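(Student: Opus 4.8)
The plan is to expand each shuffle product appearing on the left by Lemma~\ref{lem:prop}, interchange the order of the two summations, and recognize the resulting inner sum over $k$ as the coefficient $a_{2m-1,j}$ (resp.\ $a_{2m,j}$). Concretely, in the odd case Lemma~\ref{lem:prop} with $p=n-k$ and $q=n-1+k$ gives
\[
(AB)^{n-k}\sha(AB)^{n-1+k}=\sum_{i\ge0}{2n-1-2i\choose n-k-i}\,4^i\,T_{2n-1,i},
\]
the binomial coefficient enforcing $0\le i\le\min(n-k,n-1+k)$ automatically. Multiplying by $(-1)^k k^{2m-1}$ and summing over $k$, I pull $4^iT_{2n-1,i}$ outside the $k$-sum. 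The substitution $j=n-i$ then turns $4^iT_{2n-1,i}$ into $4^{n-j}T_{2n-1,n-j}$ and the binomial into ${2j-1\choose j-k}$, whose support forces $1-j\le k\le j$; since $j\le n$ this lies inside the original range $1-n\le k\le n$, so the inner sum is exactly
\[
\sum_{k=1-j}^{j}(-1)^k k^{2m-1}{2j-1\choose j-k}=a_{2m-1,j}.
\]
The even identity is handled identically, with $2n-1,\,2j-1$ replaced by $2n,\,2j$ throughout, producing ${2j\choose j-k}$ and $a_{2m,j}$. (This bookkeeping also shows that the factor $4^{n-j}$ must sit inside the sum over $j$.)

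After this reduction both identities take the shape $\sum_{j}4^{n-j}T_{\bullet,n-j}\,a_{\bullet,j}$ with $j$ ranging a priori from $0$ up to $n$, whereas the asserted formulas truncate the sum at $j=m$. The heart of the matter is therefore the vanishing
\[
a_{2m-1,j}=0\quad\text{and}\quad a_{2m,j}=0\qquad\text{for }j>m.
\]
To prove this I would rewrite, via $l=j-k$,
\[
a_{2m-1,j}=(-1)^j\sum_{l=0}^{2j-1}(-1)^l{2j-1\choose l}(j-l)^{2m-1},
\qquad
a_{2m,j}=(-1)^j\sum_{l=0}^{2j}(-1)^l{2j\choose l}(j-l)^{2m},
\]
and invoke the standard fact that the $N$-th finite difference $\sum_{l=0}^{N}(-1)^l{N\choose l}P(l)$ annihilates every polynomial $P$ of degree $<N$. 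Here $(j-l)^{2m-1}$ has degree $2m-1<2j-1=N$ precisely when $j>m$, and similarly $(j-l)^{2m}$ has degree $2m<2j=N$; hence both sums vanish. One checks the endpoint $j=0$ separately: the odd sum is empty and the even one equals $0^{2m}=0$ since $m\ge1$. The hypothesis $n\ge m$ is exactly what guarantees that the surviving indices $1\le j\le m$ all satisfy $n-j\ge0$, so that every $T_{2n-1,n-j}$ (resp.\ $T_{2n,n-j}$) is legitimately defined.

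The main obstacle is this vanishing statement, which is the genuine ``combinatorial identity'' underlying the lemma and the natural target of the WZ-method: rather than the finite-difference argument above, one can certify $a_{2m-1,j}=a_{2m,j}=0$ for $j>m$ algorithmically by exhibiting a WZ-pair for the summand. Once the truncation at $j=m$ is in place the stated identities follow at once, since no term with $j>m$ survives and the terms with $1\le j\le m$ are exactly those displayed. The only delicate point in the manipulation is confirming that the outer range of $k$ never clips the binomial support, which, as noted, is ensured by $j\le n$.
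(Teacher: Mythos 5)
Your proof is correct and takes essentially the same route as the paper: expand each shuffle product via Lemma~\ref{lem:prop}, interchange the two sums, substitute $j=n-i$ to identify the inner $k$-sum with $a_{2m-1,j}$ (resp.\ $a_{2m,j}$), and truncate at $j=m$ using the vanishing $a_{2m-1,j}=a_{2m,j}=0$ for $j>m$, where your finite-difference argument for that vanishing is equivalent to the computation $\left(x\frac{d}{dx}\right)^{2m-1}x^{1-j}(1-x)^{2j-1}\big|_{x=1}=0$ that the paper carries out in Section~3. Your remark that the factor $4^{n-j}$ must sit inside the sum over $j$ correctly identifies a typo in the paper's displayed formulas \eqref{Todd} and \eqref{Teven}.
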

\begin{proof} By Lemma \ref{lem:prop} the left hand side of
\eqref{Todd} can be written as
$$\left(\sum_{k=1-n}^0 \sum_{j=0}^{n-1+k}+
 \sum_{k=1}^n \sum_{j=0}^{n-k} \right)(-1)^k k^{2m-1}
{{2n-1-2j}\choose{n-k-j}}\cdot  4^j T_{2n-1,j}$$ which after
reordering is equal to
 $$\sum_{j=0}^{n-1}4^j
 T_{2n-1,j}\cdot \sum_{k=1-n+j}^{n-j}(-1)^k
 k^{2m-1}{{2n-1-2j}\choose{n-k-j}}.$$
Then from \eqref{equ:aodd} (with $n-j$ in the place of $j$)
 $$\sum_{k=1-n}^n (-1)^k k^{2m-1} \left[ (AB)^{n-k} \sha
(AB)^{n-1+k} \right] =4^{n-j}\sum_{j=n-m}^{n-1}
 T_{2n-1,j}\cdot a_{2m-1,n-j}.$$
The proof of \eqref{Teven} is similar and is left to the
interested readers.
\end{proof}

\section{Proof of Theorem~\ref{thm:main}}
The special case of Theorem~\ref{thm:main} when $n=0$ was proved
in \cite{H-Pac,BBB}: for every positive integer $r$
\begin{equation}\label{equ:z22}
 \zeta(\{2\}^r)=  \frac{\pi^{2r}}{(2r+1)!}.
\end{equation}
Applying $\int_0^1$ to the two equations in Lemma \ref{lem:keylem}
and using equation \eqref{equ:z22} we see that
\begin{align} \label{equ:xodd}
 \sum_{k=1-n}^n \frac{(-1)^k k^{2m-1}
 \pi^{4n-2}}{(2n+2k-1)!(2n-2k+1)!} = &  4^n\sum_{j=1}^{m}
 x_{2n-1,n-j}\cdot a_{2m-1,j},\\
 \sum_{k=-n}^n \frac{(-1)^k k^{2m}
\pi^{4n}}{(2n+2k+1)!(2n-2k+1)!} = &4^n\sum_{j=1}^{m}
 x_{2n,n-j}\cdot a_{2m,j},  \label{equ:xeven}
\end{align}
where
$$x_{2n-1,n-j}=\int_0^1 T_{2n-1,n-j},\qquad
  x_{2n,n-j}=\int_0^1  T_{2n,n-j}.$$
It is straightforward to see that
$$x_{2n-1,n-j}=\zeta(\{2\}^{2j-1}\sha\{3,1\}^{n-j}),\qquad
  x_{2n,n-j}=\zeta(\{2\}^{2j}\sha\{3,1\}^{n-j}).$$
Note that for arbitrary fixed $n$, $x_{2n-1,n-j}$ (resp.
$x_{2n,n-j}$) are recursively defined by \eqref{equ:xodd} (resp.
\eqref{equ:xeven}) when we take $m=1,\dots,n-1$. Thus
Theorem~\ref{thm:main} is quickly reduced to the following
combinatorial identities: For all $m,n\ge 1$ we have
\begin{align}\label{equ:genodd}
 \sum_{k=1-n}^n \frac{(-1)^k k^{2m-1}}{2n-2k+1} {4n-1\choose 2n+2k-1}=& \sum_{j=1}^{m}\sum_{k=1-j}^j
   \frac{4^{n-j}(-1)^k k^{2m-1} }{2n-2j+1}   {2n-1\choose 2j-1} {2j-1\choose j-k},\\
 \sum_{k=-n}^n \frac{(-1)^k k^{2m}}{2n-2k+1} {4n+1\choose 2n+2k+1} = & \sum_{j=1}^{m}\sum_{k=-j}^j
  \frac{4^{n-j}(-1)^k k^{2m}}{2n-2j+1} {2n\choose 2j} {2j\choose j-k}.\label{equ:geneven}
\end{align}
Now observe that we may replace the outer sum of the right hand
side in both \eqref{equ:genodd} and \eqref{equ:geneven} by
$\sum_{j=1}^\infty$ by the following computation: if $j>m$ then
 \begin{align*}
 \sum_{k=1-j}^j  (-1)^k k^{2m-1}
{2j-1\choose j-k}=&\left(x\frac{d}{dx}\right)^{2m-1}
\sum_{k=1-j}^j
 (-1)^k x^k {2j-1\choose j-k}\Big|_{x=1}\\
 =&(-1)^{1-j}\left(x\frac{d}{dx}\right)^{2m-1}  x^{1-j}
 (1-x)^{2j-1}\Big|_{x=1}=0.
\end{align*}
Similarly, if $j>m$ then
$$ \sum_{k=-j}^j
  (-1)^k k^{2m} {2j\choose j-k}=0.$$
We therefore only need to prove that for all $m,n\ge 1$
\begin{align}\label{equ:genodd1}
 \sum_{k=1-n}^n \frac{(-1)^k k^m}{2n-2k+1} {4n-1\choose 2n+2k-1} =& \sum_{j=1}^\infty \sum_{k=1-j}^j
   \frac{4^{n-j}(-1)^k k^m }{2n-2j+1} {2n-1\choose 2j-1} {2j-1\choose j-k},\\
 \sum_{k=-n}^n \frac{(-1)^k k^m}{2n-2k+1} {4n+1\choose 2n+2k+1} = & \sum_{j=1}^\infty  \sum_{k=-j}^j
  \frac{4^{n-j}(-1)^k k^m}{2n-2j+1} {2n\choose 2j} {2j\choose
  j-k}.\label{equ:geneven1}
\end{align}
In fact, we only need the case $m$ is odd in \eqref{equ:genodd1}
and $m$ is even in \eqref{equ:geneven1}.

The following crucial step is suggested by D.~Zeilberger to whom
we are very grateful. It is a well-known fact that $x\Q[x]$ have
two bases over $\Q$: $\{x^m: m\ge 1\}$ and $\{{x \choose m}:m\ge
1\}$. So Theorem~\ref{thm:main} follows immediately from the
following proposition. Note that we will exchange the index $j$
and $k$ on the right hand side.

\begin{prop} For all $m,n\ge 1$ we have
\begin{align}\label{equ:oddcase}
 \sum_{k=1-n}^n \frac{(-1)^k }{2n-2k+1} {k \choose m}{4n-1\choose 2n+2k-1} =& \sum_{k=1}^\infty
 \sum_{j=1-k}^k
   \frac{4^{n-k}(-1)^j  }{2n-2k+1}{j \choose m}   {2n-1\choose 2k-1} {2k-1\choose k-j},\\
 \sum_{k=-n}^n \frac{(-1)^k }{2n-2k+1} {k \choose m}{4n+1\choose 2n+2k+1}= & \sum_{k=1}^\infty
 \sum_{j=-k}^k
  \frac{4^{n-k}(-1)^j}{2n-2k+1}{j \choose m} {2n\choose 2k} {2k\choose
  k-j}.\label{equ:evencase}
\end{align}
\end{prop}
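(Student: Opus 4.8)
The plan is to prove both identities simultaneously, for all $m\ge 1$ at once, by a generating-function argument. The key observation is that for every integer $k$ one has $\binom{k}{m}=\frac{1}{m!}\left(\frac{d}{dx}\right)^m x^k\big|_{x=1}$ (an identity of polynomials in $k$, valid for negative $k$ as well). Hence each side of \eqref{equ:oddcase} and \eqref{equ:evencase} is the image under the linear operator $L_m:=\frac{1}{m!}\left(\frac{d}{dx}\right)^m\big|_{x=1}$ of an explicit Laurent polynomial in $x$ whose coefficients carry the remaining binomial data. Since $L_m$ is linear and both sides are honest finite Laurent polynomials in $x$ — the outer sums on the right terminate because $\binom{2n-1}{2k-1}$ and $\binom{2n}{2k}$ vanish for $k>n$ — it suffices to prove that the two underlying Laurent polynomials are identically equal; that single polynomial identity then yields the proposition for every $m\ge 1$.

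First I would clean up the coefficients, using $\frac{1}{2n-2k+1}\binom{4n-1}{2n+2k-1}=\frac{1}{4n}\binom{4n}{2n+2k-1}$ and $\frac{1}{2n-2k+1}\binom{2n-1}{2k-1}=\frac{1}{2n}\binom{2n}{2k-1}$ (and the analogous pair with $\binom{4n+2}{2n+2k+1}$ and $\binom{2n+1}{2k}$ in the even case). On the left the generating polynomial is then $\sum_k(-1)^k x^k\binom{4n}{2n+2k-1}$, which by extracting the odd-index part of the binomial theorem equals $(iy)^{1-2n}\cdot\frac12\big[(1+iy)^{4n}-(1-iy)^{4n}\big]$, where $y=\sqrt{x}$. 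On the right I would first carry out the inner $j$-sum in closed form via $\sum_j(-1)^j x^j\binom{2k-1}{k-j}=(-1)^{1-k}x^{1-k}(1-x)^{2k-1}$ (the very computation already used in the excerpt to kill the $j>m$ terms), so that applying $L_m$ to it reproduces the inner sum $\sum_j(-1)^j\binom{j}{m}\binom{2k-1}{k-j}$. Resumming the result over $k$ by a second binomial extraction produces a closed form in $1\pm z$ with $z=\frac{i(1-x)}{2\sqrt{x}}$.

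The heart of the proof is then the purely algebraic step of matching the two closed forms. With $y=\sqrt{x}$ one checks the identities $1\mp z=\frac{(1\pm iy)^2}{\pm 2iy}$, which convert $(1\pm z)^{2n}$ exactly into $(1\mp iy)^{4n}$ up to the common factor $(2iy)^{2n}=4^n(-1)^n y^{2n}$; substituting and tidying the powers of $i$ shows the right-hand polynomial equals the left-hand one, and $L_m$ finishes the odd case. The even case runs in the same way, with exponents $4n+2$ and $2n+1$ replacing $4n$ and $2n$, the only change being that the right-hand resummation over $k$ now uses the even-index extraction $\frac12[(1+w)^N+(1-w)^N]$. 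The one delicate point is that these closed forms are written with $\sqrt{x}$ and complex substitutions; since both sides are genuine Laurent polynomials in $x$ with integer exponents, I would treat the $y=\sqrt{x}$ bookkeeping as a formal device and, if needed, confirm the final polynomial identity directly. As the excerpt itself suggests, an alternative is to feed each (now single) sum to Zeilberger's algorithm, extract a common recurrence in $n$ with coefficients polynomial in $m$ and $n$, and match the base cases; there the main obstacle is arranging the creative-telescoping certificates so that the boundary terms coming from the asymmetric summation ranges cancel.
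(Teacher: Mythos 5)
Your route is genuinely different from the paper's. The paper breaks the inner sum on the right of \eqref{equ:oddcase} via Chu--Vandermonde to reach the closed form \eqref{equ:Rodd}, then invokes the WZ-method to produce a common third-order recurrence in $n$ satisfied by both sides and checks the base cases $n=1,2,3$ (leaving \eqref{equ:evencase} to the reader). You instead evaluate both sides in closed form as Laurent polynomials, which proves the identity for all $m$ simultaneously and even bypasses the paper's basis-change remark about $\{x^m\}$ versus $\{\binom{x}{m}\}$. Your odd case is correct as sketched: the coefficient cleanups $\frac{1}{2n-2k+1}\binom{4n-1}{2n+2k-1}=\frac{1}{4n}\binom{4n}{2n+2k-1}$ and $\frac{1}{2n-2k+1}\binom{2n-1}{2k-1}=\frac{1}{2n}\binom{2n}{2k-1}$ hold, and with $y=\sqrt{x}$, $z=i(1-x)/(2y)$ the identities $1\mp z=(1\pm iy)^2/(\pm 2iy)$ do convert $(1+z)^{2n}-(1-z)^{2n}$ into $\bigl((1-iy)^{4n}-(1+iy)^{4n}\bigr)/(2iy)^{2n}$; the powers of $i$ close up and the two Laurent polynomials agree exactly (I verified $n=1,2$ by hand). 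Where it works, this is arguably cleaner and more self-contained than the paper's proof, which asserts the existence of WZ certificates $G_i(n,k)$ without displaying them.

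However, there is one concrete error to repair: in the even case \eqref{equ:evencase} the two underlying Laurent polynomials are \emph{not} identically equal, so your reduction ``it suffices to prove that the two underlying Laurent polynomials are identically equal'' is too strong as stated and the final matching step would fail there. The right-hand outer sum starts at $k=1$, whereas the even-index extraction $\frac12\bigl[(1+z)^{2n+1}+(1-z)^{2n+1}\bigr]=\sum_{k\ge 0}\binom{2n+1}{2k}z^{2k}$ includes a $k=0$ term equal to $1$; carrying out your substitution shows the right-hand polynomial equals the left-hand one minus the constant $4^n/(2n+1)$. Concretely, at $n=1$ the left polynomial is $-x^{-1}+\tfrac{10}{3}-x$ while the right is $-x^{-1}+2-x$. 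The fix is one line: since $L_m=\frac{1}{m!}\bigl(\frac{d}{dx}\bigr)^m\big|_{x=1}$ annihilates constants for every $m\ge 1$, it suffices that the two polynomials agree up to an additive constant in $x$, which is exactly what the computation yields. With that adjustment (and noting that no such discrepancy arises in the odd case, since the odd-index extraction has no $k=0$ term), your argument is complete and correct.
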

\begin{proof} We first break the inner sum on the right hand side
of \eqref{equ:oddcase} as follows:
 $$\sum_{j=1-k}^k (-1)^j {j \choose m}  {2k-1\choose k-j}=
 \sum_{j=1}^k (-1)^j {j \choose m}  {2k-1\choose k-j}
 + \sum_{j=1}^{k-1} (-1)^{m-j} {m+j-1 \choose m}  {2k-1\choose
 j+k}.$$
Reindexing and using the Chu-Vandermonde identity  (see
\cite[p.~182]{A=B}) we change the above to
\begin{align*}
\ & \sum_{j=0}^{k-m} (-1)^{m} {-m-1\choose j}  {2k-1\choose k-m-j}
 + \sum_{j=0}^{k-2} (-1)^{m-1} {-m-1\choose j}  {2k-1\choose k-2-j}\\
 =&    (-1)^{m}  {2k-m-2\choose k-m} +(-1)^{m-1} {2k-m-2\choose
 k-2}.
\end{align*}
Notice that the sum is 0 when $k\ge m$, however, when $k<m$ only
the first term in the sum is always 0. By denoting the left (resp.
right) hand side of \eqref{equ:oddcase} by $L(m,n)$ (resp.
$R(m,n)$) we get
\begin{equation}
\label{equ:Rodd} R(m,n)= \sum_{k=1}^{n}
 \frac{(-1)^{m} 4^{n-k}}{(2n-2k+1)}{2n-1\choose
 2k-1}  \left({2k-m-2\choose k-m}- {2k-m-2\choose k-2}\right).
\end{equation}
Set
 \begin{align*}
F_0(n,k):=&\frac{(-1)^k}{ 2n-2k+1 } {k \choose m}{4n-1\choose
2n+2k-1}, \\
 F_1(n,k):=&\frac{(-1)^{m+1} 4^{n-k}}{ 2n-2k+1}{2n-1\choose
 2k-1} {2k-m-2\choose k-2},\\
 F_2(n,k):=&\frac{(-1)^{m} 4^{n-k}}{ 2n-2k+1}{2n-1\choose
 2k-1} {2k-m-2\choose k-m}.
\end{align*}
Then clearly $\lim_{k\to \pm \infty}F_i(n,k)=0$ for each $n$ and
$$L(m,n)=\sum_{k\in \Z} F_0(n,k),\quad R(m,n)= \sum_{k\in \Z}
 F_1(n,k)+ F_2(n,k).$$
Now by WZ-method  (see \cite{A=B}) we can find functions
$G_i(n,k)$ ($i=0,1,2$) and coefficients $c_j$ ($j=0,1,2,3$) such
that all the three functions $F_i$ above satisfy the following:
for all $k$
  \begin{multline}
 \label{equ:Gs}
 c_3F_i(n+3,k)-c_2F_i(n+2,k)+c_1F_i(n+1,k)-c_0F_i(n,k)=G_i(n,k+1)-G_i(n,k).
 \end{multline}
Now summing up \eqref{equ:Gs} for $-\infty <k<\infty$ we find that
both $L(m,n)$ and $R(m,n)$ satisfy the same recurrence relation
 $$c_3A(n+3)=c_2A(n+2)-c_1A(n+1)+c_0A(n).$$
Furthermore it is easy to check by using \eqref{equ:oddcase} that
for all $m\ge 1$ ($\gd_{m,1}$ is the Kronecker symbol)
$$L(m,1)=R(m,1)=-\gd_{m,1},\quad L(m,2)=R(m,2)=-5\gd_{m,1}-(-1)^m.$$
When $n=3$ direct computation using \eqref{equ:Rodd} shows that
 $$L(m,3)=R(m,3)=\begin{cases}
 -16 &\text{if } m=1,\\
 0  &\text{if } m=2,\\
 40/3 &\text{if } m=3,\\
 (-1)^m(m-52/3) \quad&\text{if } m\ge 4.\end{cases} $$
This implies that \eqref{equ:oddcase} is true.

\medskip
The proof of \eqref{equ:evencase} can be carried out by exactly
the same method as above so we leave the details to the interested
readers. This completes the proof of the proposition and therefore
Theorem~\ref{thm:main}.
\end{proof}

\noindent {\em Email:} zhaoj@eckerd.edu

\end{document}